\documentclass{fundam}

\usepackage[T1]{fontenc}
\usepackage{amsmath,amssymb,mathtools}
\usepackage{booktabs}
\usepackage{enumitem}
\usepackage{url}
\newtheorem{problem}[definition]{Problem}

\newcommand{\sfw}{\mathrm{sfw}}
\newcommand{\lsfw}{\mathrm{lsfw}}
\newcommand{\lrw}{\mathrm{lrw}}
\newcommand{\rw}{\mathrm{rw}}
\newcommand{\pw}{\mathrm{pw}}
\newcommand{\nw}{\mathrm{nw}}
\newcommand{\anw}{\mathrm{anw}}
\newcommand{\crk}{\mathrm{cr}}
\newcommand{\GF}{\mathbb F}
\newcommand{\Boxop}{\mathbin{\Box}}

\title{Split-Free Cable Expressions: Active Neighborhood Profiles and Linear Rank-Width}

\author{Antonios Kalampakas\corresponding\\
Department of Mathematics, College of Engineering\\
American University of the Middle East\\
Egaila 54200, Kuwait\\
antonios.kalampakas{@}aum.edu.kw}

\begin{document}

\address{Antonios Kalampakas, Department of Mathematics, College of Engineering,
American University of the Middle East, Egaila 54200, Kuwait.
E-mail: \url{antonios.kalampakas@aum.edu.kw}}

\maketitle

\runninghead{A. Kalampakas}{Split-Free Cable Expressions and Active Neighborhood Profiles}

\begin{abstract}
We introduce split-free cable expressions and their sequential
restriction. Live cables are vertex blocks that future operations cannot
split. The main result identifies sequential split-free cable width, up
to an additive two, with active neighborhood-width, the minimum number of
distinct nonzero future-neighborhood classes across a linear layout. The
lower bound extracts such a layout from every cable play and includes a
parity argument for prefixes inside a birth group. The upper bound
compiles any active-neighborhood layout into a singleton-birth play. This
gives an exponential upper bound and a linear lower bound in terms of
linear rank-width. We prove that the complement of the seven-vertex cycle
has linear rank-width two and sequential cable width exactly eight. This
disproves the proposed affine upper estimate already at rank two. We also
prove an unbounded separation between branching expression width and
sequential width on trees.
\end{abstract}

\begin{keywords}
linear rank-width, neighborhood-width, graph expressions, graph layouts,
cutrank, graph width parameters
\end{keywords}

\noindent\emph{ACM Computing Classification System (2012):}
Mathematics of computing, Discrete mathematics, Graph theory.

\section{Introduction}\label{sec:intro}

Rank-width measures a graph through the ranks over $\GF_2$ of the cuts
displayed by a branch-decomposition. Linear rank-width restricts the
decomposition to a linear order of the vertices and takes the largest
cutrank over its prefixes. These parameters were introduced by Oum and
Seymour \cite{Oum05,OS06}. They connect dense graph structure with
clique-width, vertex-minors, and decomposition algorithms
\cite{HO08,Oum17}. In particular,
\[
        \rw(G)\le \operatorname{cw}(G)
        \le 2^{\rw(G)+1}-1.
\]
This connection supports algorithmic methods based on labelled
expressions and logical meta-theorems \cite{CMR00}.

Linear expression parameters approach the same path-like structure from
the construction side. Linear NLC-width and linear clique-width build a
graph along one spine \cite{GW05}. Neighborhood-width instead measures
how many different future neighborhoods occur on the born side of a
linear layout \cite{Gurski06}. It differs from both linear NLC-width and
linear clique-width by at most one. Cutrank records the dimension of the
same family of rows, while neighborhood-width records how many row types
actually occur. The distinction between dimension and number of types is
central here.

We introduce a typed graph construction in which a live interface
consists of \emph{cables}. A cable is a vertex block that every later
operation must treat uniformly. A birth creates a cable, a block joins
two cables completely, a clique fills one cable, a fold merges cables,
and a close removes a cable whose incident edges are complete. The
calculus has no split operation. Its unrestricted expressions may use
parallel composition, while its sequential plays place the same
operations on one spine. This gives the split-free width $\sfw$ and the
sequential split-free width $\lsfw$.

The main comparison uses a small modification of neighborhood-width.
For a prefix of a vertex order, ignore the zero row and count the distinct
nonzero rows of the prefix-to-suffix adjacency matrix. Minimizing the
largest such count over all orders defines the active neighborhood-width
$\anw$. It differs from the standard neighborhood-width $\nw$ by at
most one. We prove
\[
        2\,\anw(G)\le \lsfw(G)\le 2\,\anw(G)+2.
\]
The lower bound is stronger than a cutrank estimate. In the birth order
of a play of width $w$, every prefix has at most
$\lfloor w/2\rfloor$ distinct nonzero rows. Prefixes that end at birth
boundaries follow directly from cable uniformity. A prefix inside a
birth group can have one provisional extra row type. At odd width, the
next legal move is forced to be a fold or a close, and that move removes
the extra type.

The upper bound is constructive. Starting from a layout with at most
$k$ active row types, keep one resident cable for each current nonzero
row class. Birth the next vertex as a singleton and join it to the
resident classes that it sees. Moving the boundary deletes one matrix
column. Old row classes can therefore merge or become zero, but they
cannot split. Folds and closes restore the resident family for the next
prefix. The resulting play has width at most $2k+2$.

Since a rank-$k$ binary matrix has at most $2^k-1$ nonzero rows, the
calibration yields
\[
        2\,\lrw(G)\le \lsfw(G)
        \le 2^{\lrw(G)+1}.
\]
Thus sequential cable width is controlled on every class of bounded
linear rank-width. The lower and upper proofs also explain why the
number of row profiles, rather than their dimension alone, is the natural
state space for the construction.

The exponential upper bound cannot in general be replaced by the affine
estimate $2\lrw(G)+2$. For the complement $H$ of the seven-vertex cycle
we prove
\[
        \lrw(H)=2,\qquad \anw(H)=3,\qquad
        \lsfw(H)=8.
\]
The graph is twin-free, so every cable play births its vertices
separately. Immediately after the fourth birth in a width-seven play,
four cables would be live. The next move would have to be a fold or a
close before any new edge could be placed. The cyclic structure rules out
every such move.

The branching and sequential variants also behave differently. Every
tree has an unrestricted split-free expression of width at most four. On
the other hand, linear rank-width equals path-width on forests
\cite{AK15}, so the general lower bound gives
$\lsfw(T)\ge 2\pw(T)$. Trees of unbounded path-width therefore give an
unbounded separation between $\sfw$ and $\lsfw$.

Section~\ref{sec:definitions} defines layouts, active
neighborhood-width, cable expressions, and cable plays.
Section~\ref{sec:calibration} proves the two-sided calibration.
Section~\ref{sec:branching} separates branching expressions from
sequential plays on trees. Section~\ref{sec:ranktwo} studies rank-two row
profiles and proves the seven-vertex counterexample.
Section~\ref{sec:conclusion} records the remaining questions.

\section{Layouts and split-free cable constructions}\label{sec:definitions}

All graphs are finite, simple, and undirected. We assign value zero to
all four width parameters on the empty graph. For a graph $G$ and
$X\subseteq V(G)$, the cutrank $\rho_G(X)$ is the rank over $\GF_2$ of
the $X$ by $V(G)\setminus X$ adjacency matrix. Let
\[
        \pi=(v_1,\ldots,v_n)
\]
be a linear layout of $G$, and put
$P_i=\{v_1,\ldots,v_i\}$ for $0\le i\le n$. The cutrank width of $\pi$
is $\max_i\rho_G(P_i)$. The linear rank-width $\lrw(G)$ is the minimum
of this quantity over all layouts. Rank-width $\rw(G)$ is defined in the
same way from the cuts displayed by a subcubic branch-decomposition, and
$\rw(G)\le\lrw(G)$.

For $u\in P_i$, write
\[
        r_i^\pi(u)=N_G(u)\cap(V(G)\setminus P_i).
\]
This set is the row of $u$ in the prefix cut matrix. Define
\[
\begin{aligned}
 \mathcal N_i^\pi
    &=\{r_i^\pi(u):u\in P_i\},\\
 \mathcal A_i^\pi
    &=\mathcal N_i^\pi\setminus\{\emptyset\}.
\end{aligned}
\]
For $P_0=\emptyset$, both families are empty. The neighborhood-width of
the layout and its active version are
\[
        \nw_\pi(G)=\max_i|\mathcal N_i^\pi|,
        \qquad
        \anw_\pi(G)=\max_i|\mathcal A_i^\pi|.
\]
The graph parameters $\nw(G)$ and $\anw(G)$ are obtained by minimizing
over all layouts. The first is the standard neighborhood-width
\cite{Gurski06}. The second removes the zero class, whose vertices have
no neighbor in the suffix.

\begin{lemma}[active and standard neighborhood-width]\label{lem:nwcompare}
For every graph $G$,
\[
        \anw(G)\le\nw(G)\le\anw(G)+1
\]
and
\[
        \lrw(G)\le\anw(G)\le 2^{\lrw(G)}-1.
\]
\end{lemma}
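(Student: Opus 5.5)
All four inequalities come from comparing, for one layout $\pi$ and one index $i$, the quantities $|\mathcal A_i^\pi|$, $|\mathcal N_i^\pi|$ and $\rho_G(P_i)$. Each such pointwise bound passes to the maximum over $i$. It then passes to the minimum over layouts, because both sides are minimized over the same set of layouts.

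For the first chain, note that $\mathcal A_i^\pi=\mathcal N_i^\pi\setminus\{\emptyset\}$. Removing one element changes the size by at most one, so
\[
|\mathcal A_i^\pi|\le|\mathcal N_i^\pi|\le|\mathcal A_i^\pi|+1 .
\]
Taking maxima gives $\anw_\pi(G)\le\nw_\pi(G)\le\anw_\pi(G)+1$ for every $\pi$. To get the graph inequalities, take an optimal layout for $\nw$; the left inequality shows $\anw(G)\le\nw(G)$. Then take an optimal layout for $\anw$; the right inequality shows $\nw(G)\le\anw(G)+1$. The empty graph is trivial, since all values are zero.

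For the second chain, identify each row $r_i^\pi(u)$ with its characteristic vector in $\GF_2^{V(G)\setminus P_i}$. Then $\rho_G(P_i)$ is the dimension of the span of the rows of the prefix cut matrix. That span equals the span of the distinct nonzero rows, namely $\mathcal A_i^\pi$. The dimension of a span is at most the number of spanning vectors, so $\rho_G(P_i)\le|\mathcal A_i^\pi|$. This gives $\lrw(G)\le\anw_\pi(G)$ for every $\pi$, hence $\lrw(G)\le\anw(G)$.

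Conversely, $\mathcal A_i^\pi$ is a set of distinct nonzero vectors in a subspace of dimension $\rho_G(P_i)$. Such a subspace has exactly $2^{\rho_G(P_i)}-1$ nonzero vectors, so $|\mathcal A_i^\pi|\le 2^{\rho_G(P_i)}-1$. Now take a layout $\pi$ attaining $\lrw(G)$. Monotonicity of $t\mapsto 2^t-1$ gives $\anw(G)\le\anw_\pi(G)\le 2^{\lrw(G)}-1$.

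There is no real obstacle here. The only point needing care is to evaluate each inequality at the correct optimal layout rather than at an arbitrary one. The linear-algebra step relies on the zero row being excluded from $\mathcal A_i^\pi$, which is exactly why the bound is $2^k-1$ rather than $2^k$.
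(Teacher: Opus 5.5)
Your proof is correct and follows the paper's argument. You compare $\mathcal A_i^\pi$ with $\mathcal N_i^\pi$ (they differ only by the zero row) at an optimal layout for each side, and you use the two linear-algebra facts the paper uses: rank is at most the number of distinct nonzero rows, and a binary space of dimension at most $k$ has at most $2^k-1$ nonzero vectors, applied at a layout realizing $\lrw(G)$; your version only spells out which layout is used for each inequality more explicitly.
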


\begin{proof}
For every layout and every prefix, the family of all rows contains the
nonzero rows and at most one additional zero row. Minimizing first an
optimal layout for each side gives
$\anw(G)\le\nw(G)\le\anw(G)+1$.

The rank of a matrix is at most the number of its distinct nonzero rows.
This gives $\lrw(G)\le\anw(G)$. Conversely, choose a layout of cutrank
at most $k=\lrw(G)$. At every prefix its rows lie in a binary vector
space of dimension at most $k$, which has at most $2^k-1$ nonzero
vectors. Hence $\anw(G)\le2^k-1$.
\end{proof}

We now define the construction language. A \emph{cable} is a nonempty
set of vertices treated as one indivisible live unit. A split-free cable
expression is a typed term generated by the following operations. A type
$p\to q$ receives $p$ ordered live cables and returns $q$ ordered live
cables. Its cable rank is
\[
        \crk(p\to q)=p+q.
\]
Composition is chronological, so $s\circ t$ means that $s$ is performed
before $t$.

\begin{center}
\begin{tabular}{lll}
\toprule
symbol & type & effect \\
\midrule
$e$ & $1\to1$ & keep one cable unchanged \\
$\nu_S$ & $0\to1$ & birth a fresh vertex set $S$ as one cable \\
$\kappa$ & $1\to0$ & close one cable \\
$\varphi$ & $2\to1$ & fold two cables into their union \\
$B$ & $2\to2$ & add all edges between two cables \\
$K$ & $1\to1$ & add all internal edges of one cable \\
\bottomrule
\end{tabular}
\end{center}

If $s:p\to q$ and $t:q\to r$, then
$s\circ t:p\to r$. If $s:p\to q$ and $t:p'\to q'$, then
\[
        s\Boxop t:p+p'\to q+q'.
\]
The parallel product executes the two terms on disjoint vertex sets and
concatenates their output cables. Cable permutations are structural
reindexings of the ordered interface. They create no vertices or edges
and do not change cable rank.

The value of a closed expression $t:0\to0$ is the graph obtained from
its block and clique operations. Edge additions use set union. An
expression is legal for a target graph $G$ if every vertex is born
exactly once, every added edge belongs to $G$, and the final graph is
$G$. Its width is the largest cable rank of a subterm occurrence. The
split-free width is
\[
        \sfw(G)=
        \min_t\max\{\crk(u):u\text{ is a subterm occurrence of }t\},
\]
where $t$ ranges over legal closed expressions for $G$. The name
\emph{split-free} records the defining restriction. Once two vertices
have entered one cable, no operation can separate them.
The rank of a subterm is its own interface type in the parse tree. In an
expression $e\Boxop s$, the enclosing occurrence pays for the additional
identity wire, but the internal subterms of $s$ retain their own types
and cable ranks.

\begin{remark}[relation with typed graph algebras]\label{rem:graphoid}
The serial and parallel interface syntax follows the graphoid viewpoint
of Bozapalidis and Kalampakas \cite{BK04}. In a graphoid, a type records
external boundary data. Here each interface position instead records a
live vertex block with a future-uniformity constraint. The cable widths
are new parameters attached to this split-free interface semantics. The
rank-oriented use of graph operations may also be compared with the
algebraic characterization of rank-width by Courcelle and Kant\'e
\cite{CK09}.
\end{remark}

The sequential form of the calculus is a cable play. A configuration on
$G$ consists of a family $\mathcal L$ of pairwise disjoint live cables
among the born vertices and a set of already placed edges. Let
$\ell=|\mathcal L|$ before a move.

\begin{definition}[sequential cable play]\label{def:play}
A play on $G$ uses the following moves.
\begin{itemize}[leftmargin=2em]
\item $\mathrm{birth}(S)$ introduces a nonempty set of fresh vertices as
one cable. Its cost is $2\ell+1$.
\item $\mathrm{fold}(X,Y)$ replaces two live cables by $X\cup Y$.
Its cost is $2\ell-1$.
\item $\mathrm{block}(X,Y)$ places every edge between two live cables.
It is legal when all these pairs are edges of $G$ and at least one has
not already been placed. Its cost is $2\ell$.
\item $\mathrm{clique}(X)$ places every internal edge of a live cable.
It is legal when $X$ is a clique of $G$ and at least one internal edge
has not already been placed. Its cost is $2\ell$.
\item $\mathrm{close}(X)$ removes a live cable. It is legal when every
edge of $G$ incident with a vertex of $X$ has already been placed.
Its cost is $2\ell-1$.
\end{itemize}
A play constructs $G$ if every vertex is born, every edge is placed, and
every cable is closed. The sequential split-free width $\lsfw(G)$ is the
minimum possible maximum cost of such a play.
\end{definition}

A play is a cable expression whose parse tree has one spine. At each
move, the local letter is placed in parallel with identities on the other
live cables. The displayed costs are exactly the cable ranks of these
letters in context. Consequently
\[
        \sfw(G)\le\lsfw(G).
\]

Two elementary facts will be used throughout.

\begin{lemma}[birth rigidity]\label{lem:birthrigidity}
Vertices born in one cable are false twins or true twins in the final
graph. In particular, they have the same neighborhood outside their
birth cable.
\end{lemma}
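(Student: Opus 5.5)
We argue by an invariant maintained along the play (or, for unrestricted expressions, along the parse tree, processed in chronological order). The invariant is: \emph{at every moment, every live cable $X$ is uniform with respect to all vertices outside $X$.} Concretely, for every vertex $z\notin X$, either every pair $\{x,z\}$ with $x\in X$ is an edge of $G$, or none is. Closed cables will not need this invariant, since a closed cable receives no further edges. So the aim is to show that the vertices of a birth cable $S$ keep identical neighborhoods outside $S$, and that all pairs inside $S$ are uniformly edges or uniformly non-edges.

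The first step is to follow the birth cable $S$ through the play. Since no operation splits a cable, $S$ always lies inside a single live cable $C(S)\supseteq S$ until that cable is closed. We now check how each kind of move affects $S$.
\begin{itemize}[leftmargin=2em]
\item A fold only enlarges $C(S)$.
\item A block involving $C(S)$ and another cable $Y$ adds, for each $y\in Y$, the edge $xy$ for \emph{every} $x\in S$.
\item A block between two cables neither of which is $C(S)$ adds no edge incident with $S$.
\item A clique on $C(S)$ adds all pairs inside $S$ together with all pairs between $S$ and $C(S)\setminus S$; the latter are again uniform in $x\in S$.
\item A clique on some other cable adds no edge incident with $S$.
\end{itemize}
Edges are placed only by block and clique, so every placed edge with exactly one endpoint in $S$ arrives together with its copies at all other vertices of $S$.

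The second step concerns the period after $C(S)$ is closed. Closing requires that every edge of $G$ incident with $C(S)$ has already been placed, and no later move can add such an edge, since every later block or clique acts on live cables disjoint from the vertices of $S$. Legality of the expression means that the final graph equals $G$ and that each edge is placed as an edge of $G$. Hence $N_G(x)\setminus S$ is the same set for all $x\in S$, which is exactly the second sentence of the lemma.

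The third step handles edges inside $S$. Such an edge can be placed only by a clique on a cable containing $S$, and that move places all pairs of $S$ simultaneously. So $G[S]$ is either edgeless or complete. Combining this with the equality of outside neighborhoods, any two $x,x'\in S$ satisfy $N(x)\setminus\{x'\}=N(x')\setminus\{x\}$, so they are false twins if $G[S]$ is edgeless and true twins if $G[S]$ is complete.

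The main subtlety is the unrestricted calculus with parallel products. There we must check that $\Boxop$ cannot place edges between $S$ and vertices of another factor except through a later block on a cable containing $S$. This holds because the factors act on disjoint vertex sets and only block and clique create edges; cable permutations create none. We handle this by induction on the term, with the uniformity invariant as the inductive hypothesis.
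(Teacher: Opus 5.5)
Your steps 1--3 are essentially the paper's argument, written out in more detail. A birth cable is never split, every block or clique that touches it acts on all of it, and a clique either places every internal pair of $S$ or no such pair is ever placed. Combined with legality (the final graph is $G$), this gives the twin conclusion.

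One thing needs fixing: the invariant you announce in your first paragraph is false for general live cables. Take the graph on $\{a,b,c\}$ whose only edge is $ac$. The play that births $\{a\}$, $\{b\}$, $\{c\}$, blocks $\{a\}$ with $\{c\}$, and then folds $\{a\}$ with $\{b\}$ is legal. It produces a live cable $\{a,b\}$ that is not uniform toward the already born vertex $c$. This is why Lemma~\ref{lem:uniformity} only claims uniformity toward unborn vertices.

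Your steps 1--3 never actually use that general invariant. Replace it, including as the inductive hypothesis in your last paragraph, by the statement your step 1 proves. For the birth cable $S$, at every moment and for every $z\notin S$, the pairs $xz$ with $x\in S$ have been placed either all or none, and the internal pairs of $S$ have been placed either all or none. With that change the parallel-product case needs no separate treatment, since $\Boxop$ and cable permutations create no edges and never split a cable.
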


\begin{proof}
The vertices are never separated. Every later block reaches all of them
or none of them, and every later clique contains all of them or none of
them. A clique either supplies every internal edge of the birth cable or
no such edge is ever supplied.
\end{proof}

\begin{lemma}[monotonicity]\label{lem:monotonicity}
If $H$ is an induced subgraph of $G$, then
$\sfw(H)\le\sfw(G)$ and $\lsfw(H)\le\lsfw(G)$. On a disconnected
graph, each width is the maximum of its values on the connected
components.
\end{lemma}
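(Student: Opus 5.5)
The plan is to obtain the induced-subgraph statement by restriction: take an optimal expression or play for $G$ and delete every vertex of $V(G)\setminus V(H)$ from every cable. Concretely, for a play on $G$ of maximum cost $w$, replace each live cable $X$ by $X\cap V(H)$. Cables that become empty are discarded. Each move is then translated as follows.

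\begin{itemize}[leftmargin=2em]
\item $\mathrm{birth}(S)$ becomes $\mathrm{birth}(S\cap V(H))$, or nothing if that set is empty.
\item A fold becomes a fold if both restricted cables are nonempty, and a relabelling (no move) otherwise.
\item A block between $X$ and $Y$ becomes a block between $X\cap V(H)$ and $Y\cap V(H)$. It is dropped if either side is empty, or if it places no new edge of $H$.
\item A clique is handled in the same way as a block.
\item A close of $X$ becomes a close of $X\cap V(H)$, or nothing if that set is empty.
\end{itemize}

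Three legality checks follow. First, every edge between $X\cap V(H)$ and $Y\cap V(H)$ is an edge of $G$ joining vertices of $H$, hence an edge of $H$ because $H$ is induced. Second, every edge of $H$ is placed, since it was placed in $G$ by some block or clique whose restriction still contains both endpoints. Third, a close remains legal, because every $H$-edge at a vertex of $X\cap V(H)$ is a $G$-edge that was already placed earlier, and its restricted placement also occurred earlier.

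For the cost bound, the number $\ell'$ of live restricted cables never exceeds $\ell$, and each move keeps its cost formula or is deleted. The one point needing care is the new-edge legality requirement on blocks and cliques; this is why redundant moves are simply dropped. Hence every surviving move costs at most its original, and $\lsfw(H)\le\lsfw(G)$.

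For $\sfw$, I would do the same restriction on the parse tree. Each subterm occurrence of type $p\to q$ becomes a term of type $p'\to q'$ with $p'\le p$ and $q'\le q$, where empty cables are erased from the interfaces. The operation letters degenerate in the following ways.
\begin{itemize}[leftmargin=2em]
\item An erased birth becomes the empty $0\to0$ term.
\item A fold with an empty input becomes an identity $e$, or the empty term.
\item A block with an empty side becomes identities.
\item A close of an empty cable becomes the empty term.
\end{itemize}
Composition and $\Boxop$ commute with this erasure, so cable ranks only decrease. I expect the main obstacle here to be the bookkeeping that the degenerate letters are expressible in the syntax with no larger rank. The empty term of type $0\to0$ has to be available, or else it has to be absorbed into a neighbouring composition. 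I would handle this by induction on the term, proving that each restricted subterm is equivalent to a legal term of rank at most the original rank.

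For the disconnected case, the inequality ``$\ge$ max over components'' follows from the first part, since each component is an induced subgraph. For ``$\le$'', I would run the optimal plays for the components one after another. Each play ends with every cable closed, so $\ell=0$ at the start of each component's play, and the costs are unchanged. This gives $\lsfw(G)\le\max_C\lsfw(C)$.

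For $\sfw$ the same sequential composition works: optimal closed $0\to0$ expressions $t_C$ are composed as $t_{C_1}\circ t_{C_2}\circ\cdots$. These closed compositions have rank $0$, so the width is the maximum of the component widths. Every vertex is born once and the edges of $G$ are exactly the union of the component edges, so the result is legal.
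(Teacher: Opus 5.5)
Your proposal is correct and follows the paper's own argument. You restrict a construction to $V(H)$, suppress empty cables, turn one-sided folds into identities, drop moves that become vacuous, and obtain the component statement from independent constructions; the paper combines component expressions with $\Boxop$ where you compose closed $0\to0$ terms with $\circ$, and both work since either combination has rank $0$. Your explicit treatment of the new-edge condition on blocks and cliques, and of empty $0\to0$ terms, fills in details that the paper's one-line sketch leaves implicit.
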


\begin{proof}
Restrict a legal construction to $V(H)$ and delete operations that become
vacuous. Empty restricted cables are suppressed, and a fold with only
one nonempty input becomes an identity. This does not increase any
interface or play cost. Components may be constructed independently in
parallel for expressions and consecutively for plays. The reverse
inequalities follow because every component is induced.
\end{proof}

\section{Calibration by active neighborhood-width}\label{sec:calibration}

This section proves the central two-sided comparison. The lower bound
extracts a layout from a play. The upper bound turns a layout back into a
play. We begin with the uniformity that a live cable imposes across the
current cut.

\begin{lemma}[cable uniformity]\label{lem:uniformity}
Fix a moment of a play, a cable $X$ live at that moment, and a vertex $v$
not yet born. In the final graph, $v$ is adjacent to all vertices of
$X$ or to none of them.
\end{lemma}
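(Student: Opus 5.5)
The plan is to track where every edge between a vertex of the live cable $X$ and the not-yet-born vertex $v$ could possibly come from, and to show that each such source adds all or none of these edges. The play constructs $G$, so every edge of $G$ is placed exactly by some block or clique move. After the fixed moment, $X$ is never split, because no move separates vertices of a live cable. The only ways $X$ can change are a fold, which replaces $X$ by a superset $X\cup Y$, and a close, which removes the cable containing $X$. So I first record an invariant. At every later moment, until the cable containing $X$ is closed, there is a unique live cable $C\supseteq X$.

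Next, look at the edges between $X$ and $v$. They can only be placed after $v$ is born, hence strictly after the fixed moment.

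\textbf{Block moves.} Consider a $\mathrm{block}(Z,W)$ at a later time that places an edge $xv$ with $x\in X$. Then $x$ and $v$ lie in different cables $Z$ and $W$, say $x\in Z$. By the invariant, $Z\supseteq X$. Also $v\in W$, and the move places all edges between $Z$ and $W$. Hence every pair $x'v$ with $x'\in X$ is placed, and all of these are edges of $G$ by legality.

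\textbf{Clique moves.} Similarly, a $\mathrm{clique}(Z)$ placing $xv$ has $x,v\in Z$. By the invariant $Z\supseteq X$, so it places $x'v$ for all $x'\in X$.

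\textbf{Edges placed after the close.} After the cable containing $X$ is closed, no later move involves vertices of $X$, since they are no longer in any live cable. So no edge at $X$ can be placed then.

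It follows that if some edge $xv$ with $x\in X$ is in $G$, it was placed by such a move, which also placed all $x'v$. Since every placed edge belongs to $G$, $v$ is adjacent to all of $X$. Otherwise $v$ is adjacent to none of $X$.

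The only mildly delicate point is the invariant itself: that after folds the cable containing $x$ still contains all of $X$. This is a direct induction on moves, since a fold only takes unions of whole live cables and no move shrinks a cable. It also uses the fact that a closed cable's vertices never re-enter the configuration, because births use fresh vertices only.
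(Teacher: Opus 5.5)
Your proof is correct and follows essentially the same route as the paper, whose proof notes that $X$ stays inside one live cable (possibly enlarged by folds) until it closes, so every later block or clique touching one vertex of $X$ touches all of them. You simply make explicit what the paper leaves implicit: the fold invariant, the fact that closed vertices never re-enter, and the use of legality (placed edges lie in $G$) together with completeness (every edge of $G$ is placed).
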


\begin{proof}
The vertices of $X$ remain together until they close, possibly inside a
larger cable created by folds. Every later block or clique involving one
member of $X$ therefore involves every member.
\end{proof}

\begin{lemma}[closed rows]\label{lem:closedrows}
If a cable is closed at some moment, its vertices have no neighbors among
the vertices unborn at that moment.
\end{lemma}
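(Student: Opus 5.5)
The plan is to argue directly from the legality condition of the close move, together with the fact that a placed edge must have both endpoints already born. Fix the moment at which a cable $X$ is closed, take a vertex $x\in X$, and suppose, towards a contradiction, that $x$ has a neighbor $v$ in $G$ such that $v$ is still unborn at that moment.

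First, the legality condition in Definition~\ref{def:play} for $\mathrm{close}(X)$ requires that every edge of $G$ incident with a vertex of $X$ has already been placed. In particular, the edge $xv$ must have been placed by some block or clique move occurring strictly before the close.

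Second, I check that no edge can be placed before both of its endpoints are born. A $\mathrm{block}(Y,Z)$ move places only edges between two live cables, and a $\mathrm{clique}(Y)$ move places only internal edges of one live cable. Live cables consist of born vertices by the definition of a configuration. Hence every placed edge has both endpoints born at the time it is placed, and therefore at every later moment as well.

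Third, apply this to the edge $xv$. Since $v$ is unborn at the moment of the close, $v$ was unborn at every earlier moment, so $xv$ cannot have been placed before the close. This contradicts the first step, so $x$ has no neighbor among the unborn vertices. Since $x\in X$ was arbitrary, the lemma follows.

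I expect no real obstacle. The only point needing care is the observation that edges are only placed between born vertices, which must be read off from the fact that block and clique act on live cables. An equivalent route is to note that a single unborn neighbor of one vertex of $X$ would, by Lemma~\ref{lem:uniformity}, be adjacent to all of $X$, but this is not needed: the direct argument already rules out any unborn neighbor.
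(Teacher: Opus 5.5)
Your proof is correct and follows the same route as the paper: the legality condition of $\mathrm{close}(X)$ requires every incident edge to have been placed, and an edge to an unborn vertex cannot have been placed. Your check that block and clique act only on live cables of born vertices spells out the second step, which the paper states in one line.
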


\begin{proof}
A close is legal only after every incident edge has been placed. An edge
to an unborn vertex cannot already have been placed.
\end{proof}

At a prefix that ends between two births, these lemmas give at most one
nonzero row type per live cable. A prefix inside a birth group may also
see the row of the part of that group already placed in the order. Odd
width forces this provisional extra type to collapse.

\begin{lemma}[odd-width collapse]\label{lem:oddcollapse}
Suppose a play of width $2k+1$ births a set $S$ while $k$ cables are live.
Let $B$ be the vertices born earlier and let
$\emptyset\ne S'\subsetneq S$. The cut matrix with row side
$B\cup S'$ has at most $k$ distinct nonzero rows.
\end{lemma}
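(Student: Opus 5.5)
The plan is to bound separately the row types coming from $B$ and from $S'$, and then use the forced next move to remove one type. Let $U$ be the set of vertices not yet born when $S$ is born. The column side of the cut is $(S\setminus S')\cup U$. The birth of $S$ costs $2k+1$, so it is allowed, and afterwards $k+1$ cables are live. Consider the next move. A birth would cost $2k+3$ and a block or clique would cost $2k+2$, both exceeding $2k+1$. The play cannot end, because live cables remain. Hence the next move is a fold or a close, each costing $2k+1$. Also, no edge incident with $S$ has been placed at that point: such an edge could only be placed by a block or clique performed after $S$ was born.

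\emph{Rows from $B$.} A vertex of $B$ closed before the birth of $S$ has no neighbour in $U\cup S$ by Lemma~\ref{lem:closedrows}, since all of $S\cup U$ was unborn at that moment. So its row is zero. The live part of $B$ lies in $k$ cables. Apply Lemma~\ref{lem:uniformity} to each such cable at the moment just before the birth, when every vertex of $S\cup U$ is unborn. Each cable then has a single row over $S\cup U$, hence over the columns $(S\setminus S')\cup U$. So $B$ contributes at most $k$ nonzero row types, at most one per live cable.

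\emph{Rows from $S'$.} By Lemma~\ref{lem:birthrigidity}, the vertices of $S$ have a common neighbourhood in $U$. Moreover $S$ is either a clique or independent, because its internal edges arise only from a clique move on a cable containing all of $S$. Hence every $s\in S'$ has the same row: its common $U$-neighbourhood, together with either all or none of $S\setminus S'$. So there are at most $k+1$ nonzero types in total, and the task is to kill one of them using the forced move.

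\emph{Case analysis on the forced move.}
\begin{enumerate}[leftmargin=2em]
\item \emph{$\mathrm{close}(S)$.} No edge at $S$ has been placed, so $S$ is isolated in $G$. The $S'$ row is zero.
\item \emph{$\mathrm{close}(X)$ for an older cable $X$.} No edge from $X$ to $S$ has yet been placed, and none to $U$ can have been placed. So $X$ has no neighbours in $S\cup U$, and its row is zero. $B$ then contributes at most $k-1$ nonzero types.
\item \emph{$\mathrm{fold}(X,Y)$ of two older cables.} Every edge from $X\cup Y$ to $S\cup U$ is placed after this fold. Such an edge is placed by a block or clique on a cable containing $X\cup Y$, so it reaches all of $X\cup Y$ at once. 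Hence $X$ and $Y$ have identical rows over $S\cup U$, and $B$ contributes at most $k-1$ types.
\item \emph{$\mathrm{fold}(S,X)$.} This is the delicate case. I will argue that the rows of $S'$ and of $X$ coincide on $(S\setminus S')\cup U$. On $U$ this follows from Lemma~\ref{lem:uniformity} applied to the merged cable $S\cup X$. On $S\setminus S'$, note that every later cable containing a vertex of $S$ contains all of $S\cup X$. So an edge inside $S$, or between $X$ and $S$, can only come from a clique move on such a cable. That move places all edges of both kinds simultaneously, since $G$ must contain them all for the move to be legal. Therefore $S$ is a clique exactly when $X$ is complete to $S$, and the two rows agree.
\end{enumerate}

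I expect the main obstacle to be this last fold case. The argument that the internal structure of $S$ and the $X$--$S$ adjacency are decided by the same clique moves must be made carefully from the definitions of the moves. In every case the number of distinct nonzero rows is at most $k$.
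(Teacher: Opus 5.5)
Your proposal is correct and follows the paper's proof essentially step for step: the same cost count forcing a fold or close before any edge at $S$ is placed, the same bound of $k+1$ provisional types, and the same four-way case analysis on the forced move, including the observation that in the $\mathrm{fold}(S,X)$ case one clique on a descendant cable decides both the internal edges of $S$ and the $X$--$S$ edges. The only difference is cosmetic: in the $\mathrm{fold}(X,Y)$ case you handle the columns in $S$ and in $U$ with a single uniformity argument, whereas the paper cites cable uniformity for the later vertices and treats $S$ separately.
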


\begin{proof}
Immediately after the birth there are $k+1$ live cables. Another birth
would cost $2k+3$, while a block or clique would cost $2k+2$.
The next move is therefore a fold or a close, and no edge incident with
$S$ is placed before it.

Before using this forced move, Lemmas~\ref{lem:uniformity},
\ref{lem:closedrows}, and \ref{lem:birthrigidity} give at most one row
type for each old cable and one for $S'$. There are at most $k+1$
provisional nonzero types. The columns of the cut are
\[
        (S\setminus S')\cup
        \{\text{vertices born after }S\}.
\]
We show that the forced move makes one provisional type zero or identifies
two of them.

If an old cable closes, it has no neighbor in $S$ or among later vertices,
so its row is zero. If $S$ closes, the row of $S'$ is zero.

Suppose $S$ folds with an old cable $X$. On vertices born later, $X$ and
$S'$ have the same row by cable uniformity after the fold. It remains to
compare their entries on $S\setminus S'$. By birth rigidity, $S$ is
independent or a clique. If it is independent, no later clique may act
on a descendant cable containing $S$, so no edge between $X$ and $S$ can
be created after the fold. If it is a clique, its internal edges must be
created by a later clique on a descendant cable. Legality then forces
every edge between $X$ and $S$. In both cases the row of $X$ agrees with
the row of $S'$ on $S\setminus S'$.

Finally, suppose two old cables $X$ and $Y$ fold. Their entries agree on
vertices born later by uniformity after the fold. No edge incident with
$S$ was placed before the fold. Every later operation that can create an
edge from $S$ to $X$ or $Y$ treats the common descendant of $X$ and $Y$
uniformly. Their entries therefore also agree on $S\setminus S'$.

Each possible forced move removes one provisional nonzero type. At most
$k$ distinct nonzero rows remain.
\end{proof}

\begin{proposition}[a play yields an active-neighborhood layout]
\label{prop:playtolayout}
The birth order of every play of width $w$ is a layout of active
neighborhood-width at most $\lfloor w/2\rfloor$. Within each birth group,
the vertices may be ordered arbitrarily.
\end{proposition}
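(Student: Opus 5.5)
Fix a play of width $w$ and let $\pi$ be its birth order, with the vertices of each birth group listed consecutively in an arbitrary internal order. Put $k=\lfloor w/2\rfloor$. Every prefix $P_i$ of $\pi$ is either a \emph{boundary prefix}, consisting exactly of the vertices born up to some moment between births, or an \emph{interior prefix} $B\cup S'$ with $\emptyset\ne S'\subsetneq S$ for some birth group $S$ and the set $B$ of vertices born earlier. I will bound $|\mathcal A_i^\pi|$ by $k$ in the two cases separately.

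For a boundary prefix, look at the moment just before the next birth, say $\mathrm{birth}(S)$ with $\ell$ live cables. If no birth follows, the suffix is empty and all rows are zero. Otherwise the birth costs $2\ell+1\le w$, so $\ell\le k$. Every born vertex either lies in a live cable or in a cable that has already been closed. The second kind of vertex has empty row by Lemma~\ref{lem:closedrows}, because all suffix vertices are unborn at that moment. A live cable $X$ has its vertices sharing a single row by Lemma~\ref{lem:uniformity}, since each suffix vertex sees all of $X$ or none of it. Hence there are at most $\ell\le k$ nonzero rows. Taking the moment just before the next birth, and not some earlier moment, is what guarantees that the live cables still cover all born vertices that are not closed, whatever folds and closes occurred since the last birth.

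For an interior prefix, let $\ell$ be the number of live cables just before $\mathrm{birth}(S)$. Again $2\ell+1\le w$. If $w=2k+1$ and $\ell=k$, Lemma~\ref{lem:oddcollapse} applies directly and gives at most $k$ nonzero rows. Otherwise $\ell\le k-1$. In that case the old vertices contribute at most $\ell$ nonzero types, by Lemmas~\ref{lem:uniformity} and~\ref{lem:closedrows} applied at the moment of the birth. The vertices of $S'$ contribute at most one more type, because by Lemma~\ref{lem:birthrigidity} they are twins, so their neighbourhoods agree outside $S$. They also agree on $S\setminus S'$, since $S$ is either a clique or an independent set. This gives at most $\ell+1\le k$ nonzero rows.

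The main obstacle is this interior case, and it is already isolated in Lemma~\ref{lem:oddcollapse}. What remains is the case split. When $w$ is even, $2\ell+1\le 2k$ forces $\ell\le k-1$, so the provisional extra type fits within the bound. When $w$ is odd, only $\ell=k$ needs the collapse lemma. Since the internal order within each birth group was arbitrary, the statement about reordering inside birth groups follows.
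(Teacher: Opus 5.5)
Your proof is correct and follows the paper's argument essentially verbatim. It uses the same split into prefixes ending before a birth (at most $\ell\le\lfloor w/2\rfloor$ types) and prefixes cutting a birth group (at most $\ell+1$ types), and the same appeal to Lemma~\ref{lem:oddcollapse} in the single remaining case $w=2k+1$, $\ell=k$. Your explicit handling of the final prefix, and your check that the vertices of $S'$ also agree on $S\setminus S'$ because $S$ is a clique or an independent set, are small clarifications that the paper leaves implicit.
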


\begin{proof}
Fix the birth order and a prefix. First suppose the prefix ends before a
birth. If $\ell$ cables are live immediately before that birth, then
$2\ell+1\le w$. Closed vertices have zero rows by
Lemma~\ref{lem:closedrows}, and every live cable gives one row type by
Lemma~\ref{lem:uniformity}. The number of nonzero rows is at most
\[
        \ell\le\left\lfloor\frac{w-1}{2}\right\rfloor
        \le\left\lfloor\frac w2\right\rfloor.
\]

Now let the prefix cut through a birth group $S$. Write it as
$B\cup S'$ with $\emptyset\ne S'\subsetneq S$, and let $\ell$ be the
number of old live cables before the birth. The old live cables give at
most $\ell$ row types. Closed vertices give the zero row. The vertices
of $S'$ give at most one further type by birth rigidity. Thus there are
at most $\ell+1$ nonzero rows.

If $w=2k$, then $\ell\le k-1$, so $\ell+1\le k$. If $w=2k+1$ and
$\ell<k$, the same bound holds. The remaining case has $w=2k+1$ and
$\ell=k$, where Lemma~\ref{lem:oddcollapse} gives at most $k$ nonzero
rows. Every prefix therefore has at most $\lfloor w/2\rfloor$ active
row types.
\end{proof}

The converse construction is a canonical sweep of the row classes.

\begin{proposition}[an active-neighborhood layout yields a play]
\label{prop:layouttoplay}
If a layout $\pi$ has active neighborhood-width at most $k$, then $G$
has a singleton-birth play of width at most $2k+2$.
\end{proposition}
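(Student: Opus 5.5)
We build the play by induction along $\pi=(v_1,\ldots,v_n)$ and maintain a canonical configuration. \emph{Invariant after stage $i$:} the live cables are exactly the classes of the relation ``same nonzero row $r_i^\pi$'' on $P_i$, one cable for each element of $\mathcal A_i^\pi$. Every vertex of $P_i$ with zero row is closed. Every edge of $G[P_i]$ has been placed. Then at most $k$ cables are live between stages. The invariant holds trivially at $i=0$, and at $i=n$ every row is zero, so all cables are closed and $G$ is constructed.

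\emph{Stage $i\to i+1$.} Let $v=v_{i+1}$ and let $\ell\le k$ be the number of resident cables. First perform $\mathrm{birth}(\{v\})$, which costs $2\ell+1\le 2k+1$. By Lemma~\ref{lem:uniformity}, or directly from the invariant (a class has a common row $r_i^\pi$, and $v$ is a column of it), $v$ is adjacent to all or none of the vertices of each resident cable $X$. For each $X$ that $v$ sees, perform $\mathrm{block}(X,\{v\})$ with $\ell+1$ live cables, costing $2\ell+2\le 2k+2$. Its edges are genuine edges of $G$, and they are new because $v$ was just born. After these blocks, every edge of $G[P_{i+1}]$ has been placed. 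No clique move is needed, since births are singletons and internal edges of a class always arise through blocks at the birth of the later endpoint.

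\emph{Restoration.} We now compare with $\mathcal A_{i+1}^\pi$. Passing from $r_i^\pi$ to $r_{i+1}^\pi$ deletes the column $v$. Two vertices with equal rows at stage $i$ therefore keep equal rows, so classes never split; they can only merge or become zero. The new vertex $v$ either has zero row, forms its own class, or joins an existing class. We perform folds to merge the cables whose rows now coincide, including $\{v\}$ if appropriate. We then close the cables whose row is now empty. A close is legal because all edges to $P_{i+1}$ are placed and there are no edges to the suffix. Every fold and close happens with at most $\ell+1\le k+1$ live cables, so each costs at most $2k+1$. After restoration the live cables are exactly the classes of $\mathcal A_{i+1}^\pi$, which re-establishes the invariant.

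\emph{Main obstacle.} The delicate point is the cost bookkeeping at the peak. We need the invariant to hold at $\ell\le k$ before the birth, which is guaranteed by the bound $|\mathcal A_i^\pi|\le k$ at the previous prefix. We also need every move to occur with at most $k+1$ live cables. This is why the birth must come before any fold or close of the new stage, and why all restoration moves only decrease the live count. Finally, we check legality details: a block needs at least one unplaced edge, which holds since the edges involve the newly born $v$; a fold needs two live cables, which always exist when merging. This yields width at most $2k+2$.
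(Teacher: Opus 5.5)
Your proof is correct and follows the paper's own canonical sweep essentially step for step. You keep one resident cable per nonzero row class, birth $v$ as a singleton at cost at most $2k+1$, and place blocks from $\{v\}$ at cost at most $2k+2$; you then restore the invariant by folds and closes with at most $k+1$ live cables, using that deleting the column of $v$ can merge or zero row classes but never split them. The only difference is cosmetic: you fold before closing, while the paper closes first, and the cost bound is the same either way.
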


\begin{proof}
Let $\pi=(v_1,\ldots,v_n)$ and $P_i=\{v_1,\ldots,v_i\}$. After
processing $P_i$, maintain one live cable for each nonzero row class of
the cut $(P_i,V(G)\setminus P_i)$. Also maintain that every edge of
$G[P_i]$ has been placed. The invariant is empty at $P_0$.

Assume it holds for $P_i$ and set $v=v_{i+1}$. There are at most $k$
resident cables. Birth $v$ as a singleton, at cost at most $2k+1$.
Every resident cable is one row class at $P_i$. Since the column of $v$
is constant on a row class, $N(v)\cap P_i$ is a union of resident
cables. Apply one block from $\{v\}$ to each resident cable in this
union. Every such block costs at most $2k+2$, and these blocks place
exactly the edges from $v$ to $P_i$. Hence every edge of
$G[P_{i+1}]$ is now placed.

Moving from $P_i$ to $P_{i+1}$ deletes the column of $v$ from every old
row. Equal old rows remain equal, so no old cable needs to split.
Some old row classes may become zero or merge. The singleton $\{v\}$
contributes the only new row. Close each cable whose new row is zero.
This is legal because it has no neighbor outside $P_{i+1}$ and all its
edges inside $P_{i+1}$ have been placed. Fold cables whose new nonzero
rows are equal until one cable remains for each nonzero class. Folds and
closes occur with at most $k+1$ live cables and cost at most $2k+1$.
The invariant is restored.

At $P_n$, every row is zero, so the remaining cables close legally.
The maximum cost is at most $2k+2$.
\end{proof}

\begin{theorem}[active neighborhood-width calibration]
\label{thm:calibration}
For every graph $G$,
\[
        2\,\anw(G)\le\lsfw(G)\le2\,\anw(G)+2.
\]
\end{theorem}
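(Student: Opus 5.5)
The theorem follows by combining Propositions~\ref{prop:playtolayout} and~\ref{prop:layouttoplay}. We treat the two inequalities separately and dispose of the empty graph first: there both sides vanish by the convention that all four width parameters are zero.

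For the lower bound, take a play of minimum width $w=\lsfw(G)$. Proposition~\ref{prop:playtolayout} shows that its birth order, with vertices inside each birth group ordered arbitrarily, is a layout of active neighborhood-width at most $\lfloor w/2\rfloor$. Hence
\[
        \anw(G)\le\lfloor \lsfw(G)/2\rfloor\le \lsfw(G)/2,
\]
and multiplying by two gives $2\,\anw(G)\le\lsfw(G)$. The birth order is a genuine linear layout of $V(G)$, because every vertex is born exactly once. This covers every prefix $P_i$, including $P_0$ and $P_n$, since the proposition handles prefixes ending at birth boundaries as well as prefixes cutting through a birth group.

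For the upper bound, choose a layout $\pi$ with $\anw_\pi(G)=\anw(G)=k$. Proposition~\ref{prop:layouttoplay} produces a singleton-birth play of width at most $2k+2$, so $\lsfw(G)\le 2\,\anw(G)+2$. Nothing further is needed, because that play constructs $G$: every vertex is born, every edge is placed by the invariant, and every cable is closed at $P_n$.

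The substantive work therefore sits in the two propositions already established. Within them, the delicate step is the odd-width case of the lower bound, namely Lemma~\ref{lem:oddcollapse}, which rules out a provisional extra row type when $w=2k+1$ and $\ell=k$. In assembling the theorem, the only point to check is the use of $\lfloor w/2\rfloor$, so that no additive slack is lost on the lower side. Since $2\lfloor w/2\rfloor\le w$, the bound $2\,\anw(G)\le\lsfw(G)$ holds with no slack for both parities of $w$.
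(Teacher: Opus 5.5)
Your proof is correct and follows the paper's argument exactly. The lower bound applies Proposition~\ref{prop:playtolayout} to an optimal play to get $\anw(G)\le\lfloor\lsfw(G)/2\rfloor$, and the upper bound applies Proposition~\ref{prop:layouttoplay} to a layout realizing $\anw(G)$; your extra remarks on the empty graph, prefix coverage, and Lemma~\ref{lem:oddcollapse} are accurate but already handled inside those propositions.
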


\begin{proof}
For the lower bound, take an optimal play of width $w$. By
Proposition~\ref{prop:playtolayout},
$\anw(G)\le\lfloor w/2\rfloor$, so $2\anw(G)\le w$.
For the upper bound, apply Proposition~\ref{prop:layouttoplay} to a layout
realizing $\anw(G)$.
\end{proof}

\begin{remark}[sharp endpoints]\label{rem:sharp}
Both endpoints can occur. Every nontrivial complete graph has
$\anw(K_n)=1$ and a width-two play obtained by one birth, one clique,
and one close. Hence $\lsfw(K_n)=2$. The graph $H$ in
Section~\ref{sec:ranktwo} has $\anw(H)=3$ and $\lsfw(H)=8$, so it
attains the upper endpoint.
\end{remark}

The theorem immediately places sequential cable width among the standard
linear layout parameters.

\begin{corollary}[comparison with neighborhood-width]
\label{cor:nw}
For every graph $G$,
\[
        2\,\nw(G)-2\le\lsfw(G)\le2\,\nw(G)+2.
\]
\end{corollary}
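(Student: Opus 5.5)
The corollary follows by chaining Theorem~\ref{thm:calibration} with the first pair of inequalities in Lemma~\ref{lem:nwcompare}. No new combinatorial argument is needed; only the additive constants have to be tracked.

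For the upper bound, Lemma~\ref{lem:nwcompare} gives $\anw(G)\le\nw(G)$. Substituting this into the right-hand inequality of Theorem~\ref{thm:calibration} yields
\[
        \lsfw(G)\le 2\,\anw(G)+2\le 2\,\nw(G)+2 .
\]

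For the lower bound, Lemma~\ref{lem:nwcompare} gives $\nw(G)\le\anw(G)+1$, that is, $\anw(G)\ge\nw(G)-1$. Substituting this into the left-hand inequality of Theorem~\ref{thm:calibration} yields
\[
        \lsfw(G)\ge 2\,\anw(G)\ge 2\,\nw(G)-2 .
\]

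I would also check the empty graph separately, since all four parameters are assigned the value zero there. The lower bound reads $0\ge -2$ and the upper bound reads $0\le 2$, so both hold.

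There is no real obstacle here. The only point needing care is the direction of each inequality in Lemma~\ref{lem:nwcompare}: the bound $\anw\le\nw$ feeds the upper estimate, and the bound $\nw\le\anw+1$ feeds the lower estimate. The additive slack thus widens from $[0,2]$ in the theorem to $[-2,2]$ here.
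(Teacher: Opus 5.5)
Your proof is correct and matches the paper's: the paper also just combines Theorem~\ref{thm:calibration} with $\anw(G)\le\nw(G)\le\anw(G)+1$ from Lemma~\ref{lem:nwcompare}, using each inequality in the direction you do. The separate check of the empty graph is harmless but not needed, since both cited results already cover that case.
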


\begin{proof}
Combine Theorem~\ref{thm:calibration} with
Lemma~\ref{lem:nwcompare}.
\end{proof}

\begin{corollary}[comparison with linear rank-width]
\label{cor:lrw}
For every graph $G$,
\[
        2\,\lrw(G)\le\lsfw(G)\le2^{\lrw(G)+1}.
\]
In particular, $\lrw(G)\le1$ implies $\lsfw(G)\le4$, and
$\lrw(G)\le2$ implies $\lsfw(G)\le8$.
\end{corollary}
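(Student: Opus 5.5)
The plan is to read the corollary directly off Theorem~\ref{thm:calibration} combined with the second chain of Lemma~\ref{lem:nwcompare}, namely $\lrw(G)\le\anw(G)\le 2^{\lrw(G)}-1$. Set $k=\lrw(G)$. No new combinatorial argument is needed; all the work has already been done in the play-to-layout and layout-to-play propositions.

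For the lower bound, Theorem~\ref{thm:calibration} gives $2\,\anw(G)\le\lsfw(G)$. Lemma~\ref{lem:nwcompare} gives $\lrw(G)\le\anw(G)$, because a matrix has rank at most its number of distinct nonzero rows. Chaining these yields $2\,\lrw(G)\le\lsfw(G)$.

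For the upper bound, Theorem~\ref{thm:calibration} gives $\lsfw(G)\le 2\,\anw(G)+2$. Substituting $\anw(G)\le 2^k-1$ gives
\[
        \lsfw(G)\le 2(2^k-1)+2=2^{k+1}.
\]

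The two particular cases follow by monotonicity of $k\mapsto 2^{k+1}$. If $\lrw(G)\le1$, then $\lsfw(G)\le 4$. If $\lrw(G)\le2$, then $\lsfw(G)\le 8$.

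The only point needing a moment's care is the empty graph, and the case $k=0$ in general. All widths are declared zero on the empty graph, so it is fine. For an edgeless nonempty graph, $\anw=0$ and the singleton-birth play has width $1\le 2$, consistent with the bound $2^{1}=2$. I therefore expect no real obstacle here; the substantive difficulty lies entirely in the earlier propositions, which we may assume.
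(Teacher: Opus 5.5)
Your proof is correct and matches the paper's argument: the lower bound chains $\lrw(G)\le\anw(G)$ with $2\,\anw(G)\le\lsfw(G)$, and the upper bound substitutes $\anw(G)\le 2^{k}-1$ into $\lsfw(G)\le 2\,\anw(G)+2$. Your separate check of the case $k=0$ is harmless but not needed, because the chained inequalities already cover it.
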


\begin{proof}
The lower bound follows from
$\lrw(G)\le\anw(G)$ and Theorem~\ref{thm:calibration}. For the upper
bound, put $k=\lrw(G)$. Lemma~\ref{lem:nwcompare} and the same theorem
give
\[
        \lsfw(G)\le2(2^k-1)+2=2^{k+1}.
\]
\end{proof}

\begin{remark}[dimension and row types]\label{rem:dimensiontypes}
Linear rank-width measures the dimension of the prefix row space.
Active neighborhood-width measures the number of nonzero vectors from
that space that actually occur. The gap between the linear and
exponential bounds in Corollary~\ref{cor:lrw} is exactly the possible gap
between these two quantities. The canonical sweep keeps the occurring
row classes themselves. This is why folds are always safe when a column
deletion makes two classes equal.
\end{remark}

\section{Branching expressions and sequential separation}\label{sec:branching}

Parallel composition lets an unrestricted expression build a subtree
independently and expose only its attachment vertex. This keeps the
expression width bounded on all trees.

\begin{theorem}[trees have bounded expression width]\label{thm:trees}
Every tree $T$ satisfies
\[
        \sfw(T)\le4.
\]
\end{theorem}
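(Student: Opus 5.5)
We build the expression recursively: for every rooted tree $(T,r)$ we construct a term $t_{T,r}:0\to1$ whose single output cable is $\{r\}$ and which places every edge of $T$. Then the closed expression $t_{T,r}\circ\kappa$ is legal for $T$, since $r$ has no neighbours outside $T$, and the close has cable rank $1$. Every subterm of $t_{T,r}$ will have cable rank at most $4$. A single-vertex tree gets $t=\nu_{\{r\}}$, with rank $1$.

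For the inductive step, let $r$ have children $c_1,\ldots,c_m$ with subtrees $T_1,\ldots,T_m$. We attach the subtrees one at a time, maintaining a term $s_j:0\to1$ whose output cable is $\{r\}$ and which has built $r$ together with $T_1,\ldots,T_j$ and all edges among them. Start with $s_0=\nu_{\{r\}}$. To pass from $s_j$ to $s_{j+1}$ we form
\[
 s_{j+1}=(s_j\Boxop t_{T_{j+1},c_{j+1}})\circ B\circ(e\Boxop\kappa).
\]
The parallel product has type $0\to2$, so its rank is $2$. The block $B:2\to2$ adds the single edge $rc_{j+1}$ and has rank $4$. The term $e\Boxop\kappa:2\to1$ has rank $3$, and it closes $\{c_{j+1}\}$, which is legal because all edges at $c_{j+1}$ are now placed. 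The composite has type $0\to1$ and rank $1$. Internal subterms of $s_j$ and $t_{T_{j+1},c_{j+1}}$ keep their own ranks, which are at most $4$ by induction. Finally we set $t_{T,r}=s_m$.

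The main point to check is the accounting convention for cable rank. We must make sure no hidden subterm exceeds $4$: the identity wire $e$ is paid only at the enclosing $e\Boxop\kappa$ occurrence, and nested compositions are assigned the types of their endpoints. By the stated convention, a composite $s\circ t$ has rank $p+r$ computed from its outer type. Every intermediate interface in the construction has at most two cables, so the bound of $4$ holds.

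Legality also needs checking. Each vertex is born exactly once, because the parallel product uses disjoint vertex sets. Each tree edge is added exactly once, by the block at its parent, and no non-edge is added. The final graph is $T$, and the induction on $|V(T)|$ closes the argument.
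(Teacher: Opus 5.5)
Your proof is correct and follows essentially the same route as the paper: induction on rooted subtrees with output cable $\{r\}$, attaching each child subtree through a parallel product, one block for the edge to the root, and a padded close $e\Boxop\kappa$ of the child root. The only difference is cosmetic. You place the accumulated term $s_j$ in parallel with $t_{T_{j+1},c_{j+1}}$ (type $0\to2$), whereas the paper composes $\nu_{\{r\}}$ with factors $e\Boxop t(T_i,r_i)$ (type $1\to2$); if anything, your version lowers that occurrence's rank from three to two.
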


\begin{proof}
Root $T$ at a vertex $r$. We construct by induction an expression
$t(T,r):0\to1$ of width at most four. Its only output cable is
$\{r\}$, and all other vertices have closed.

For a one-vertex tree, take $t(T,r)=\nu_{\{r\}}$. Let
$r_1,\ldots,r_d$ be the children of $r$, and let $T_i$ be the subtree
rooted at $r_i$. Start with $\nu_{\{r\}}$. For each child, compose
with
\[
        (e\Boxop t(T_i,r_i))
        \circ B
        \circ(e\Boxop\kappa).
\]
The parallel expression builds $T_i$ beside the live root cable. The
block places the edge $rr_i$, and the close removes the child root. The
expression $e\Boxop t(T_i,r_i)$ has type $1\to2$, the block has type
$2\to2$, and the padded close has type $2\to1$. Their cable ranks are
at most four. The induction gives the same bound inside each child
expression. Closing the final root produces a legal closed expression
for $T$.
\end{proof}

\begin{corollary}[unbounded sequential gap]\label{cor:treegap}
The difference between $\lsfw$ and $\sfw$ is unbounded, even on trees.
More precisely, every forest $F$ satisfies
\[
        \lsfw(F)\ge2\,\pw(F),
\]
while every tree has expression width at most four.
\end{corollary}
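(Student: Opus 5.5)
The plan is to combine the linear-rank-width lower bound of Corollary~\ref{cor:lrw} with the known identity between linear rank-width and path-width on forests, and then to use Theorem~\ref{thm:trees} for the other side.

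\textbf{Lower bound.} Let $F$ be a forest. By Corollary~\ref{cor:lrw}, $\lsfw(F)\ge 2\,\lrw(F)$. The result of Adler and Kant\'e \cite{AK15} gives $\lrw(F)=\pw(F)$ for every forest. Substituting yields $\lsfw(F)\ge 2\,\pw(F)$.

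One point needs care. For a forest with isolated vertices or with several components, the path-width convention and the convention that the empty graph has width zero must match the cited statement. I would handle this through Lemma~\ref{lem:monotonicity}, which reduces everything to connected components. Since $\lrw$ and $\pw$ are both the maximum over components, it suffices to treat trees.

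\textbf{Unbounded gap.} Theorem~\ref{thm:trees} gives $\sfw(T)\le 4$ for every tree $T$. It remains to exhibit trees of unbounded path-width. Take the complete ternary trees $T_h$ of height $h$. It is classical that $\pw(T_h)\ge h$: the standard ``three branches'' argument shows that a tree containing a vertex with three branches of path-width at least $p$ has path-width at least $p+1$.

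Combining the two bounds,
\[
\lsfw(T_h)-\sfw(T_h)\ge 2h-4\to\infty .
\]

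\textbf{Main obstacle.} The only genuinely nontrivial input is the equality $\lrw=\pw$ on forests, which I would cite rather than reprove. Without the citation, only the easy direction $\lrw\le\pw$ is available, and that points the wrong way for this proof.

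A self-contained fallback would argue directly from the layout inequality $\lrw(F)\le\anw(F)$, so that $\lsfw(F)\ge 2\,\anw(F)$ by Theorem~\ref{thm:calibration}. One would then show that a layout of a forest whose prefixes have at most $k$ distinct nonzero rows yields a path decomposition of width $O(k)$. The natural bag for a prefix consists of one representative per row class together with its suffix neighbours, but this only gives a constant-factor bound. For the growth statement that constant factor would suffice, but the precise inequality as stated requires the citation.
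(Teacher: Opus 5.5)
Your proof is correct and follows the paper's argument. Corollary~\ref{cor:lrw} together with the identity $\lrw=\pw$ on forests from \cite{AK15} gives $\lsfw(F)\ge 2\,\pw(F)$, and Theorem~\ref{thm:trees} combined with trees of unbounded path-width gives the unbounded gap; your complete ternary trees make explicit what the paper leaves implicit.

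The reduction to components is harmless but unnecessary. The fallback sketch would not work as stated: bags containing all suffix neighbours of a row-class representative are not bounded in $k$, since a star centre placed first in the layout gives $k=1$ but arbitrarily many suffix neighbours.
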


\begin{proof}
Linear rank-width equals path-width on forests \cite{AK15}. The lower
bound follows from Corollary~\ref{cor:lrw}. Trees have unbounded
path-width, while Theorem~\ref{thm:trees} bounds their expression width.
\end{proof}

This separation is structural. A branching expression may finish a child
subtree and close its interface before starting another child. A
sequential play must choose one global birth order, and its active
neighborhood classes record the path-like congestion of that choice.

\section{Rank-two profiles and failure of the affine estimate}
\label{sec:ranktwo}

For a layout of cutrank at most two, every prefix row belongs to a binary
space of dimension at most two. At most three nonzero row types can
occur. The general sweep therefore uses at most three resident cables
and one transient arrival cable, giving width eight. We first isolate the
condition under which two residents suffice.

\begin{lemma}[rank-two cut anatomy]\label{lem:ranktwo}
Let $P$ be a prefix with $\rho_G(P)\le2$. If the cutrank is two, choose
nonzero row types $r_1,r_2$ as a basis, and let $A$, $B$, and $C$ be the
classes with rows $r_1$, $r_2$, and $r_1+r_2$, respectively. Empty
classes are allowed. The neighborhood in $P$ of every vertex outside
$P$ is one of
\[
        \emptyset,\qquad A\cup C,\qquad B\cup C,\qquad A\cup B.
\]
At cutrank one, there is only one nonzero row class.
\end{lemma}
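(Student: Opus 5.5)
The plan is to read off the neighborhood of an outside vertex $v$ from its column in the cut matrix of $(P,V(G)\setminus P)$. For $u\in P$, the entry in row $u$ and column $v$ is $1$ exactly when $u\in N(v)$. Hence $N(v)\cap P$ is the union of those row classes whose common row vector has a $1$ in coordinate $v$. Vertices with zero row are never adjacent to $v$. The nonzero classes in the rank-two case are $A$, $B$, $C$ with rows $r_1$, $r_2$, $r_1+r_2$. The row space has dimension at most two, so these are the only possible nonzero rows.

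Next, I will do a case analysis on the pair of bits $(r_1(v),r_2(v))\in\GF_2^2$. Linearity gives $(r_1+r_2)(v)=r_1(v)+r_2(v)$. This yields the following four cases.
\begin{itemize}[leftmargin=2em]
\item $(0,0)$: the neighborhood is $\emptyset$.
\item $(1,0)$: $v$ sees the classes with rows $r_1$ and $r_1+r_2$, namely $A\cup C$.
\item $(0,1)$: $v$ sees $B\cup C$.
\item $(1,1)$: $r_1+r_2$ has a $0$ in coordinate $v$, so $v$ sees $A\cup B$.
\end{itemize}
Empty classes cause no trouble: the union formulas still hold with some terms empty.

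For cutrank one, every nonzero row lies in a one-dimensional space over $\GF_2$. Such a space contains a single nonzero vector, so there is one nonzero class. Cutrank zero is vacuous.

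There is essentially no obstacle. The only point needing care is the justification that a column entry is constant on a row class, which is immediate since equal rows have equal entries. This is the same observation already used in Proposition~\ref{prop:layouttoplay}.
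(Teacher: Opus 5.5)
Your proof is correct and matches the paper's argument: the paper also sets $\alpha=r_1(v)$ and $\beta=r_2(v)$, notes that the column of $v$ is $\alpha$ on $A$, $\beta$ on $B$ and $\alpha+\beta$ on $C$, and reads off the four neighborhoods from the four choices of $(\alpha,\beta)\in\GF_2^2$. Your extra remarks only spell out what the paper treats as immediate, namely that zero-row vertices never see $v$ and that a one-dimensional binary space has a single nonzero vector.
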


\begin{proof}
For $v\notin P$, set
$\alpha=r_1(v)$ and $\beta=r_2(v)$. The entries in the column of $v$
are $\alpha$ on $A$, $\beta$ on $B$, and $\alpha+\beta$ on $C$.
The four choices of $(\alpha,\beta)\in\GF_2^2$ give the displayed
neighborhoods. The rank-one statement is immediate.
\end{proof}

\begin{definition}[two-profile layout]\label{def:twoprofile}
A two-profile layout is a vertex order in which every prefix cut has at
most two distinct nonzero rows.
\end{definition}

\begin{proposition}[two profiles give width six]\label{prop:twoprofile}
If $G$ has a two-profile layout, then
\[
        \lsfw(G)\le6.
\]
\end{proposition}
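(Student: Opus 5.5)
A two-profile layout is exactly a layout $\pi$ with $\anw_\pi(G)\le 2$. So the plan is to apply Proposition~\ref{prop:layouttoplay} with $k=2$, which gives $\lsfw(G)\le 2\cdot2+2=6$. Concretely, I would rerun the canonical sweep and check its costs. After processing $P_i$ there are at most two resident cables, one per nonzero row class of the cut $(P_i,V(G)\setminus P_i)$. The move costs are then as follows.
\begin{itemize}
\item Birthing $v_{i+1}$ as a singleton happens with $\ell\le 2$ live cables, so it costs at most $5$.
\item The blocks from $\{v_{i+1}\}$ to the resident cables in $N(v_{i+1})\cap P_i$ happen with $\ell\le 3$, so each costs at most $6$.
\item Every fold or close happens with $\ell\le 3$, so each costs at most $5$.
\end{itemize}
Thus no move exceeds cost six.

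The substantive point is that the invariant ``one cable per nonzero row class'' is restored with at most two cables. This holds because $P_{i+1}$ again has at most two nonzero row classes, by the two-profile hypothesis. The argument from Proposition~\ref{prop:layouttoplay} carries over unchanged:
\begin{itemize}
\item deleting the column of $v_{i+1}$ can only merge old row classes or make them zero, never split them;
\item the only new row is that of $\{v_{i+1}\}$;
\item closing the cables whose row is now zero is legal, since all their edges inside $P_{i+1}$ have been placed;
\item folding equal rows leaves one cable per nonzero class of $P_{i+1}$, hence at most two.
\end{itemize}
At the end every row is zero, and the remaining cables close at cost at most $3$.

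I expect no real obstacle, since the statement is the case $k=2$ of the general sweep. The one thing to verify is that the transient configuration never has more than three live cables. During the blocks there are exactly the at most two residents plus the arrival, because all closes and folds are performed only after the blocks. Lemma~\ref{lem:ranktwo} is not needed for this bound.
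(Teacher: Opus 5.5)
Your proposal is correct and follows the paper's proof. The paper also notes that a two-profile layout is exactly a layout with $\anw_\pi(G)\le 2$ and applies Proposition~\ref{prop:layouttoplay} with $k=2$; your move-by-move cost check of the sweep is just an explicit unpacking of that proposition.
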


\begin{proof}
Such a layout has active neighborhood-width at most two. Apply
Proposition~\ref{prop:layouttoplay}.
\end{proof}

\begin{proposition}[cycles]\label{prop:cycles}
For every $n\ge4$,
\[
        \lsfw(C_n)\le6.
\]
\end{proposition}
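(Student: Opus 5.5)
The plan is to exhibit a two-profile layout of $C_n$ and then invoke Proposition~\ref{prop:twoprofile}. Label the cycle $c_1,c_2,\ldots,c_n$ with edges $c_jc_{j+1}$ and $c_nc_1$. We will not use the cyclic order itself. Instead we sweep the cycle from one vertex outward in both directions, so that at every moment the born part is a path segment with only two boundary vertices.

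The layout is
\[
        \pi=(c_1,c_2,c_n,c_3,c_{n-1},c_4,c_{n-2},\ldots),
\]
which alternately extends the born arc clockwise and counterclockwise. Every proper nonempty prefix $P_i$ is then a contiguous arc of the cycle. Its complement is also a contiguous arc. In the prefix-to-suffix matrix, a vertex of $P_i$ has a nonzero row only if it is an endpoint of the arc adjacent to the complement. There are at most two such endpoints. For $i=1$ the single vertex $c_1$ gives one row $\{c_2,c_n\}$. For $i=n-1$ every born vertex that is nonzero sees only the last vertex, so the rows coincide. Hence $|\mathcal A_i^\pi|\le2$ for every $i$, and $\pi$ is a two-profile layout.

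The simple cyclic order $(c_1,\ldots,c_n)$ would also work. Its prefix $\{c_1,\ldots,c_i\}$ has nonzero rows only at $c_1$ (row $\{c_n\}$) and at $c_i$ (row $\{c_{i+1}\}$), again at most two types. Either layout suffices, and the check is only a few lines. There is no real obstacle beyond confirming the edge cases $i=1$ and $i=n-1$, where the two endpoint rows may coincide; this only lowers the count. Proposition~\ref{prop:twoprofile} then gives $\lsfw(C_n)\le6$.
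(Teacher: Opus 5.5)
Your proof is correct and takes essentially the paper's route: exhibit a two-profile layout and apply Proposition~\ref{prop:twoprofile}, and your second paragraph, using the plain cyclic order, is exactly the paper's argument. The alternating outward sweep is a harmless variant but buys nothing extra, since the prefixes $\{c_1,\ldots,c_i\}$ of the cyclic order are already contiguous arcs, so the same endpoint argument covers both layouts.
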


\begin{proof}
Order the vertices $v_1,\ldots,v_n$ cyclically. For
$2\le i\le n-2$, the only nonzero rows at
$P_i=\{v_1,\ldots,v_i\}$ are represented by $v_1$ and $v_i$.
The first sees only $v_n$ in the suffix, and the second sees only
$v_{i+1}$. Every vertex strictly between them in the order has zero
row. At $P_{n-1}$, the vertices $v_1$ and $v_{n-1}$ have the same
nonzero row, since both see only $v_n$. The initial prefixes also have
at most two nonzero rows. The cyclic order is therefore a two-profile
layout, and Proposition~\ref{prop:twoprofile} applies.
\end{proof}

The third row type at a rank-two cut is not merely an artifact of a
poorly chosen layout. The next graph has active neighborhood-width three
in every layout.

\begin{definition}[the graph $H$]\label{def:H}
Let $C$ be the cycle on $\mathbb Z_7$, with subscripts read modulo seven,
and let
\[
        H=\overline C.
\]
Thus $uv$ is an edge of $H$ exactly when the cyclic distance between
$u$ and $v$ is two or three.
\end{definition}

\begin{lemma}[profiles of $H$]\label{lem:Hprofiles}
The graph $H$ satisfies
\[
        \lrw(H)=2\qquad\text{and}\qquad\anw(H)=3.
\]
It has no pair of false twins or true twins.
\end{lemma}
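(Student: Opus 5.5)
The plan is to prove the three assertions separately, using the dihedral symmetry of $H$ throughout: rotations $i\mapsto i+1$ and reflections $i\mapsto -i$ of $\mathbb Z_7$ are automorphisms. Since $N_H(i)=\{i\pm2,i\pm3\}$, $H$ is $4$-regular.

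\emph{Twin-freeness.} By symmetry it suffices to compare vertex $0$ with vertices $1,2,3$. We have $N(0)=\{2,3,4,5\}$, $N(1)=\{3,4,5,6\}$, $N(2)=\{4,5,6,0\}$ and $N(3)=\{5,6,0,1\}$. The open neighborhoods are pairwise distinct. The closed neighborhoods $N[0]$ and $N[j]$ also differ, because $N[j]\setminus\{j\}$ contains a vertex outside $N[0]$. Hence there are no false or true twins. This fact also feeds the lower bound for $\lrw$.

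\emph{Linear rank-width.} For the lower bound, take any layout and its prefix $P_2=\{u,v\}$. The two rows are $N(u)\setminus\{v\}$ and $N(v)\setminus\{u\}$. Each has size at least $3$, since the degree is $4$, so both rows are nonzero. If the two rows were equal, $u$ and $v$ would be twins, which twin-freeness excludes. Two distinct nonzero binary rows have rank $2$, so $\lrw(H)\ge2$. For the upper bound I would exhibit one explicit layout of cutrank at most $2$. The cut matrix at $P_i$ is the transpose of the one at $P_{7-i}$, and prefixes of size $1$, $2$, $5$ and $6$ automatically have rank at most $2$. So only the prefixes of size $3$ (equivalently, size $4$) need checking.

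The naive cyclic order fails: at $\{0,1,2\}$ the rows are $1110$, $1111$ and $0111$ over columns $3,4,5,6$, which have rank $3$. I would therefore search among orders such as $0,3,6,2,5,1,4$ (stepping by the distance-$3$ edges) or $0,1,3,4,\dots$, and verify the two middle cuts by hand. Once such a layout is found, $\anw(H)\le 2^2-1=3$ follows from Lemma~\ref{lem:nwcompare}.

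\emph{Active neighborhood-width at least $3$.} I must show that every layout has a prefix with three distinct nonzero rows. At $P_3$, every row is nonzero, since each vertex has at least $2$ neighbors outside a $3$-set. So $|\mathcal A_3|\le2$ forces two prefix vertices $u,v$ with $N(u)\setminus P_3=N(v)\setminus P_3$. I would enumerate the $3$-subsets of $\mathbb Z_7$ up to the dihedral group; there are $\binom73=35$ subsets in $5$ orbits, with gap patterns $(1,1,5)$, $(1,2,4)$, $(1,4,2)$, $(1,3,3)$ and $(2,2,3)$. For each orbit I would compute the three suffix rows.

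For orbits where the three rows are pairwise distinct, the layout already has $|\mathcal A_3|=3$. For the remaining orbits, I would extend to every possible fourth vertex and show that $P_4$ has three distinct nonzero rows. By complementation, the same check can equivalently be done at the size-$3$ suffix. Note that at $P_4$ a row can vanish only if all four neighbors of that vertex lie in $P_4$.

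The main obstacle is this finite case analysis. It must be organised so that the orbit reduction is correct and each $P_3/P_4$ pair is checked without omissions. I expect the collisions at $P_3$ to occur only for a couple of orbits, such as $\{0,1,3\}$-type sets, and then the forced $P_4$ to split the equal rows again via the distance-$2$/distance-$3$ asymmetry.
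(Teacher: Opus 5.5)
The gap is in the upper bound. You never establish $\lrw(H)\le 2$: you only propose a search. Your bound $\anw(H)\le 3$, obtained via Lemma~\ref{lem:nwcompare}, rests on it.

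\begin{itemize}
\item One of your two candidates fails immediately. For $(0,1,3,\dots)$, the rows of $0,1,3$ over the columns $2,4,5,6$ are $1110$, $0111$, $0011$, which have rank three.
\item The other candidate, $(0,3,6,2,5,1,4)$, does work. At $\{0,3,6\}$ the rows over $1,2,4,5$ are $0111$, $1001$, $1110$, and the third is the sum of the first two. At $\{0,3,6,2\}$ the rows over $1,4,5$ are $011$, $101$, $110$, $011$, again of rank two. The paper instead uses $(0,1,4,3,2,5,6)$ and reads the active counts directly off it.
\end{itemize}

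Your reduction to the middle cuts also contains a false step. The cut matrix at $P_i$ is the transpose of the matrix at the suffix $V(H)\setminus P_i$, not at $P_{7-i}$. So the cuts at $P_3$ and $P_4$ are not equivalent. For example, $(0,1,4,2,\dots)$ has cutrank two at $P_3$ but three at $P_4$: there the rows of $0,1,4,2$ over $3,5,6$ are $110$, $111$, $001$, $011$. Both middle cuts must be checked, as you say at the end.

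The remaining pieces are correct, and one differs from the paper. You get $\lrw(H)\ge 2$ directly from $P_2$ and twin-freeness. The paper instead deduces it from $\anw(H)\ge 3$ together with $\anw\le 2^{\lrw}-1$. Your version has the advantage of not depending on the $\anw$ lower bound.

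For $\anw(H)\ge 3$, your enumeration would succeed, but not in the way you predict. No three-vertex set has a collision; for instance, $\{0,1,3\}$ gives the three distinct rows above. So the $P_4$ fallback is never used. That is fortunate, because the number of distinct nonzero rows is not invariant under passing to the complementary side, so your complementation remark would not be valid there.

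The paper avoids the case analysis altogether:
\begin{itemize}
\item The $H$-row of $x\in X$ on $Y$ is $Y\setminus N_C(x)$.
\item So two rows agree exactly when $N_C(x)\cap Y=N_C(x')\cap Y$, that is, when $N_C(x)\triangle N_C(x')\subseteq X$.
\item At cyclic distance one, two, or three, this symmetric difference has two, two, or four elements outside $\{x,x'\}$. The single remaining vertex of $X$ cannot cover them.
\end{itemize}

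A minor point: the five gap patterns you list are the rotation orbits. Under reflection, $(1,2,4)$ and $(1,4,2)$ coincide, so there are only four dihedral orbits.
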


\begin{proof}
For the order
\[
        \pi=(0,1,4,3,2,5,6),
\]
the cutranks and active row counts after successive prefixes are
\[
\begin{array}{c|rrrrrrr}
i&1&2&3&4&5&6&7\\
\hline
\rho_H(P_i)&1&2&2&2&2&1&0\\
|\mathcal A_i^\pi|&1&2&3&3&3&1&0.
\end{array}
\]
Thus $\lrw(H)\le2$ and $\anw(H)\le3$.

Let $X$ be any three-vertex set and put $Y=V(H)\setminus X$. Every
vertex of $X$ has a neighbor in $Y$, since its degree in $H$ is four.
We show that the three rows from $X$ to $Y$ are distinct. For
$x,x'\in X$, equality of their $H$-rows on $Y$ is equivalent to
\[
        N_C(x)\cap Y=N_C(x')\cap Y.
\]
Consider the cyclic distance between $x$ and $x'$. At distance one, the
two cycle neighborhoods have distinct outward vertices, and the third
member of $X$ cannot remove both. At distance two, their symmetric
difference consists of two vertices, and again the third member cannot
remove both. At distance three, the two cycle neighborhoods are disjoint.
Removing one further vertex cannot make their intersections with $Y$
equal. Thus every three-vertex prefix has three distinct nonzero rows,
and $\anw(H)\ge3$.

It follows that $\anw(H)=3$. Lemma~\ref{lem:nwcompare} gives
$3\le2^{\lrw(H)}-1$. Together with the displayed layout, this yields
$\lrw(H)=2$.

Finally,
\[
        N_H(u)=V(H)\setminus\{u,u-1,u+1\},
        \qquad
        N_H[u]=V(H)\setminus\{u-1,u+1\}.
\]
These open and closed neighborhoods are distinct for different vertices,
so $H$ has neither false twins nor true twins.
\end{proof}

\begin{theorem}[the affine estimate fails at rank two]\label{thm:H}
The graph $H$ satisfies
\[
        \lrw(H)=2,\qquad
        \lsfw(H)=8.
\]
Consequently, the inequality
\[
        \lsfw(G)\le2\,\lrw(G)+2
\]
does not hold for all graphs.
\end{theorem}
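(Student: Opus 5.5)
The value $\lrw(H)=2$ is already given by Lemma~\ref{lem:Hprofiles}. For the upper bound, Lemma~\ref{lem:Hprofiles} gives $\anw(H)=3$, so Theorem~\ref{thm:calibration} yields $\lsfw(H)\le 2\cdot3+2=8$. The lower bound $\lsfw(H)\ge 2\anw(H)=6$ is too weak, so everything rests on excluding a play of width at most seven. The final sentence of the statement then follows, since $2\lrw(H)+2=6<8$.

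Suppose a play of width $w\le7$ constructs $H$. Since $H$ has no twins, Lemma~\ref{lem:birthrigidity} forces every birth to be a singleton. Consider the prefix $P$ of the first four born vertices. By Lemma~\ref{lem:closedrows} and Lemma~\ref{lem:uniformity}, every nonzero row type at any inter-birth cut is carried by a live cable. Take the three-vertex prefix $X$ in birth order, at the moment just before the fourth birth. Lemma~\ref{lem:Hprofiles} shows that the three rows from $X$ to $Y=V(H)\setminus X$ are distinct and nonzero. Hence at least three cables are live then, and by uniformity each cable contains at most one vertex of $X$. More precisely, the three vertices of $X$ lie in three distinct live cables, with no vertex of $X$ closed (all rows are nonzero). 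The birth of the fourth vertex $v$ then costs $2\cdot3+1=7$ and produces four live cables. The next move cannot be a birth (cost $9$), a block or a clique (cost $8$). It must therefore be a fold or a close, performed before any edge incident with $v$ is placed.

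The key step, which I expect to be the main obstacle, is to rule out each such forced move using the cyclic structure. Let the live cables be $\{x_1\},\{x_2\},\{x_3\}$ (possibly enlarged by earlier vertices only if rows coincide, which we just excluded, so exactly the singletons $x_i$) together with $\{v\}$.

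A close of $\{x_i\}$ would require that $x_i$ has no unborn neighbor and that all its edges to $v$ and to the other $x_j$ are placed. The edges to $v$ are impossible, so $x_i$ is nonadjacent to $v$. Then $x_i$ has degree four in $H$, and all its neighbors would lie in $X\setminus\{x_i\}$, which has size two. This is a contradiction. A close of $\{v\}$ fails for the same reason.

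A fold of $\{x_i\},\{x_j\}$ requires, by uniformity, that $x_i$ and $x_j$ have equal rows on the three unborn vertices. In addition, their adjacency to $v$ must agree, because no edge at $v$ has yet been placed and later operations treat the fold uniformly. So $x_i$ and $x_j$ have equal rows on $V\setminus\{x_1,x_2,x_3\}$, that is, on $Y$, contradicting Lemma~\ref{lem:Hprofiles}.

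A fold of $\{v\}$ with $\{x_i\}$ requires $v$ and $x_i$ to have equal rows on the three unborn vertices. Their mutual adjacency and their adjacencies to the other two vertices of $X$ must also be compatible: as in the proof of Lemma~\ref{lem:oddcollapse}, $x_i$ and $v$ become false or true twins relative to the remaining vertices. Concretely, $N(v)\setminus\{x_i\}$ and $N(x_i)\setminus\{v\}$ would agree on $V\setminus\{v,x_i\}$. This makes $v$ and $x_i$ twins in $H$, contradicting Lemma~\ref{lem:Hprofiles}.

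I would verify this last twin argument carefully, checking that the edges from the fold to $x_j$ are placed uniformly whether or not they were placed before the fold. Edges $x_ix_j$ placed earlier force the corresponding edge $vx_j$, because a later block on the merged cable must be legal, and $x_j$ cannot be closed before that block. With this, every forced move is illegal, so $\lsfw(H)\ge8$, which gives $\lsfw(H)=8$.
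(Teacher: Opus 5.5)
Your outline matches the paper's proof, and most of it is correct: singleton births, three separate live singletons just before the fourth birth, a forced fold or close right after it, and your exclusion of closes and of a fold of two old singletons. The gap is in the fold of $\{v\}$ with $\{x_i\}$.

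You claim this fold makes $v$ and $x_i$ twins in $H$. Your justification is that an edge $x_ix_j$ placed earlier forces $vx_j$, because a later block on the merged cable must be legal. That is false. Suppose $x_ix_j$ was placed by a block before $v$ was born and $vx_j\notin E(H)$. Then no later block between the cable of $x_j$ and the merged cable is ever needed, so no constraint on $vx_j$ arises. The appeal to Lemma~\ref{lem:oddcollapse} does not help either, since that lemma compares rows only on $S\setminus S'$ and on later vertices, never on older ones.

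Put $Y'=V(H)\setminus(X\cup\{v\})$. What the fold actually forces is:
\begin{itemize}
\item[(a)] $N_H(v)\cap Y'=N_H(x_i)\cap Y'$, by uniformity;
\item[(b)] only the inclusion $N_H(v)\cap(X\setminus\{x_i\})\subseteq N_H(x_i)$. Each edge $vx_j$ is still unplaced, and any block or clique that later places it also places $x_ix_j$.
\end{itemize}
Twin-freeness alone does not contradict (a) and (b), so the step as written does not go through.

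There are two ways to repair it.
\begin{itemize}
\item \emph{Regularity.} Since $H$ is $4$-regular, $|N_H(v)\setminus\{x_i\}|=|N_H(x_i)\setminus\{v\}|$. Both sets lie in $Y'\cup(X\setminus\{x_i\})$ and agree on $Y'$ by (a), so they meet $X\setminus\{x_i\}$ in equally many vertices. The inclusion (b) is therefore an equality, $v$ and $x_i$ really are twins, and Lemma~\ref{lem:Hprofiles} gives the contradiction. This one line rescues your twin route.
\item \emph{The paper's case analysis.} Split on the cyclic distance $d$ between $v$ and $x_i$. For $d=3$ the two cycle neighborhoods are disjoint, so (a) fails by counting. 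For $d\in\{1,2\}$, condition (a) forces into $X$ the cycle neighbor $y$ of $x_i$ on the side away from $v$ and the cycle neighbor of $v$ on the side away from $x_i$. Then $vy\in E(H)$ while $x_iy\notin E(H)$, which violates (b).
\end{itemize}
Either way, the argument must rest on the one-directional condition (b), not on the symmetric twin condition you asserted.
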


\begin{proof}
The upper bound follows from $\anw(H)=3$ and
Theorem~\ref{thm:calibration}. Suppose that a play of width at most seven
constructs $H$. Since $H$ is twin-free,
Lemma~\ref{lem:birthrigidity} forces every birth to be a singleton.

Consider the moment before the fourth birth, and let $X$ be the three
vertices already born. By Lemma~\ref{lem:Hprofiles}, their rows toward
the four unborn vertices are nonzero and pairwise distinct. Closed
vertices have zero rows, while vertices in one live cable have the same
future row. Hence the three vertices of $X$ are live in three separate
singleton cables.

Let $v$ be the fourth vertex born and put
$Y=V(H)\setminus(X\cup\{v\})$. Immediately after this birth, four cables
are live. A birth would cost nine, and a block or clique would cost
eight. Before any edge incident with $v$ can be placed, the next move
must therefore be a close or a fold.

No cable can close. The new vertex $v$ has a neighbor in $X$, since it
has four neighbors and $Y$ has only three vertices. The corresponding
edge has not been placed. For an old vertex $x\in X$, its row before the
birth was nonzero. If $xv$ is an edge, that edge is unplaced. If $xv$ is
not an edge, then $x$ has a neighbor in $Y$. In either case $x$ cannot
close.

Two old vertices cannot fold. If their rows on $Y$ differ, an unborn
vertex would distinguish them after the fold. If their rows on $Y$ are
equal, their distinct rows before the birth differ exactly at coordinate
$v$. Thus $v$ is adjacent to exactly one of them. After the fold, the
unplaced required edge cannot be added without also adding the forbidden
other edge.

It remains to rule out a fold of $v$ with an old vertex $x$. Such a fold
requires
\[
        N_C(v)\cap Y=N_C(x)\cap Y,
\]
because otherwise a future vertex distinguishes the folded pair. Let the
cyclic distance between $x$ and $v$ be $d$.

If $d=1$, equality forces the two outward cycle neighbors of $x$ and
$v$ to be the other two vertices of $X$. Taking the outward neighbor
$y$ of $x$, we have $vy\in E(H)$ but $xy\notin E(H)$. If $d=2$,
equality similarly forces the two noncommon cycle neighbors into
$X$. The outward neighbor $y$ of $x$ again satisfies
$vy\in E(H)$ and $xy\notin E(H)$. In either case the required edge
$vy$ is still unplaced, and after folding $v$ with $x$ it cannot be
added without also adding the forbidden edge $xy$. If $d=3$, the two
cycle neighborhoods are disjoint. Only two of their four vertices can
lie in $X\setminus\{x\}$, so their intersections with $Y$ cannot be
equal.

Every possible next move is impossible. This contradiction proves
$\lsfw(H)\ge8$, and the upper bound gives equality.
\end{proof}

Theorem~\ref{thm:H} shows that row dimension alone does not support the
previously suggested affine upper estimate. It also shows that the
width-six sufficient condition in Proposition~\ref{prop:twoprofile}
cannot cover every graph of linear rank-width two.

\section{Conclusion}\label{sec:conclusion}

Sequential split-free cable width is governed, within an additive two, by
the number of active neighborhood classes in a linear layout. The
calibration gives both a canonical construction and a certificate
extraction procedure. It also separates two sources of complexity.
Cutrank measures the dimension needed to describe a boundary, while a
cable play pays for the row types that must remain distinguishable.

The exact active-neighborhood comparison leaves a small additive problem.
For a fixed value $k=\anw(G)$, it is natural to determine when the
sequential width is $2k$, $2k+1$, or $2k+2$. The graph $H$ in
Section~\ref{sec:ranktwo} shows that the upper endpoint can occur when
$k=3$. It remains to characterize the graphs attaining each value.

The branching parameter also remains largely open. The tree construction
shows that it can be much smaller than its sequential restriction, but no
general comparison with rank-width is proved here.

\begin{problem}[branching lower bound]\label{prob:branching}
Does every graph $G$ satisfy
\[
        \rw(G)\le
        \left\lfloor\frac{\sfw(G)}2\right\rfloor?
\]
\end{problem}

A positive answer would complete the formal analogy between branching
expressions and rank-width on one side, and sequential plays and linear
rank-width on the other. A negative answer would identify a genuine
asymmetry between the two split-free constructions.

\bibliographystyle{fundam}
\bibliography{mybib}

\end{document}